\newtheorem{thm}{Theorem}[section]
\newtheorem{prop}[thm]{Proposition}
\theoremstyle{definition}
\newtheorem{defn}[thm]{Definition}
\newtheorem{quest}[thm]{Question}
\theoremstyle{remark}
\numberwithin{equation}{thm}
\newcommand{\N}{\mathbb{N}}
\newcommand{\ran}{\mathrm{ran}}
\newcommand{\nleq}{\not \leq}
\newcommand{\ngeq}{\not \geq}
\newcommand{\RCA}{\mathsf{RCA}_0}
\newcommand{\ACA}{\mathsf{ACA}_0}
\newcommand{\WKL}{\mathsf{WKL}_0}
\begin{document}

\title{Infinite saturated orders}
\author{Damir D. Dzhafarov}
\address{University of Chicago}
\email{damir@math.uchicago.edu}
\thanks{The author was partially supported by an NSF Graduate Research Fellowship.  He is grateful to R. Suck for introducing him to this problem, and to E. Dzhafarov and his thesis advisers, R. Soare, D. Hirschfeldt, and A. Montalb\'{a}n, for helpful comments.}

\begin{abstract}
We generalize the notion of saturated order to infinite partial orders and give both a set-theoretic and an algebraic characterization of such orders.  We then study the proof theoretic strength of the equivalence of these characterizations in the context of reverse mathematics, showing that depending on one's choice of definitions it is either provable in $\RCA$ or equivalent to $\ACA$.
\end{abstract}

\maketitle

\section{Introduction}

Saturated orders were introduced by Suck in \cite{Su1} as a generalization of interval orders.  The latter, developed by Fishburn (see \cite{Fi1}), have been used extensively in the theory of measurement, utility theory, and various areas of psychophysics and mathematical psychology (see \cite{Fi1}, Chapter 2, for examples).  Suck applied the concept of saturated orders to the theory of knowledge spaces, as introduced by Doignon and Falmagne (see \cite{DoFa}), but he formulated it for finite orders only.  Since the study of knowledge spaces in general need not be restricted to finite structures, it is natural to ask whether the concept of saturation can be formulated for arbitrary partial orders.

In this note, we give such a formulation and show it to be equivalent to a certain algebraic characterization of partial orders.  We then look at the proof theoretic strength of this equivalence using the framework of reverse mathematics.  This answers questions of Suck raised at the {\em Reverse Mathematics: Foundations and Applications} workshop at the University of Chicago in November 2009.  Beyond an interest in the underlying combinatorial principles, the motivation for this kind of analysis comes from seeking a possible new basis by which to judge and compare competing quantitative approaches to problems in cognitive science.  The exploration of this interaction was one of the goals of the Chicago workshop.

\begin{defn}
Let ${\bf P} = (P,\leq_P)$ be a partial order.
\begin{enumerate}
\item An {\em interval representation} of $\bf P$ is a map $f$ from $P$ into the set of finite open intervals of some linear order ${\bf L} = (L,\leq_L)$ such that for all $p,p' \in P$, $p <_P p'$ if and only if $\ell <_L \ell'$ for all $\ell \in f(p)$ and $\ell' \in f(p')$;
\item $\bf P$ is an {\em interval order} if it admits an interval representation.
\end{enumerate}
\end{defn}

\begin{defn}[\cite{Su1}, Definitions 1 and 3] \label{D:set_finite}
Let ${\bf P} = (P, \leq_P)$ be a finite partial order.
\begin{enumerate}
\item A {\em set representation} of $\bf P$ is an injective map $\varphi : P \to \mathcal{P}(Q)$ for some set $Q$ such that $p <_P p'$ if and only if $\varphi(p) \subset \varphi(p')$ for all $p,p' \in P$.
\item A set representation $\varphi$ of $\bf P$ is {\em parsimonious} if $\left| \varphi(p) \right|  = \left| \bigcup_{p' <_P p} \varphi(p') \right| + 1$
for all $p \in P$.
\item $\bf P$ is {\em saturated} if $\left| P \right | = \left| \bigcup_{p \in P} \varphi(p) \right|$
for all parsimonious set representations $\varphi$ of $\bf P$.
\end{enumerate}
\end{defn}

Every finite partial order ${\bf P} = (P, \leq_P)$ admits at least one parsimonious set representation, namely $\pi : P \to \mathcal{P}(P)$ where $\pi(p) = \{p' \in P: p' \leq_P p\}$ for all $p \in P$.  Suck \cite[Definition 2]{Su1} calls this the {\em principal ideal representation} of $\bf P$.  The notion of saturation arose as a means of characterizing orders for which this is essentially the only parsimonious set representation (\cite{Su1}, p. 375).  Indeed, suppose $\varphi : P \to \mathcal{P}(Q)$ is parsimonious, and let $\alpha_\varphi: P \to Q$ be defined by setting $\alpha_\varphi(p)$ for each $p \in P$ to be the single element of $\varphi(p) - \bigcup_{p' <_P p} \varphi(p')$.  If $\bf P$ is saturated then $\alpha_\varphi$ must be a bijection between $P$ and $\bigcup_{p \in P} \varphi(p)$.  Let $\leq_Q$ be an ordering of the latter set defined by setting $q \leq_Q q'$ for each $q,q' \in \bigcup_{p \in P} \varphi(p)$ if and only if $q = \alpha_\varphi(p)$ and $q' = \alpha_\varphi(p')$ for some $p,p' \in P$ with $p \leq_P p'$.  Then $\alpha_\varphi$ is an isomorphism of $\bf P$ with $(\bigcup_{p \in P} \varphi(p), \leq_Q)$, and $\varphi(p) = \alpha_\varphi(\pi(p))$ for all $p \in P$.  Thus, up to a renaming of elements, $\varphi$ and $\pi$ are the same representation.

Suck \cite[Theorem 2]{Su1} showed that every finite interval order is a saturated order.  On the other hand, it is easy to build a saturated order which admits a suborder of type $\bf 2 \oplus 2$, i.e., a suborder isomorphic to $(\{a,b,c,d\},\leq)$ where $a \leq b$, $c \leq d$, $a \nleq d$ and $c \nleq b$ (see \cite{Su1}, Figure 2).  Such an order cannot be an interval order:

\begin{thm}[Fishburn \cite{Fi2}, p. 147; Mirkin \cite{Mi}]\label{T:intchar}
A partial order is an interval order if and only if it does not contain a suborder of type $\bf 2 \oplus 2$.
\end{thm}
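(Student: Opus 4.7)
The forward direction is a short direct argument. Given an interval representation $f$ of $\bf P$ into $\bf L$ and a supposed suborder of type $\bf 2 \oplus 2$ on $\{a,b,c,d\}$ with $a <_P b$, $c <_P d$, $a \nleq_P d$, and $c \nleq_P b$: the relations $a <_P b$ and $c <_P d$ force every point of $f(a)$ (resp.\ $f(c)$) strictly below every point of $f(b)$ (resp.\ $f(d)$); the failures $a \not<_P d$ and $c \not<_P b$ yield $\ell_a \in f(a)$, $\ell_d \in f(d)$ with $\ell_d \leq_L \ell_a$ and $\ell_b \in f(b)$, $\ell_c \in f(c)$ with $\ell_b \leq_L \ell_c$. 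Chaining, $\ell_a <_L \ell_b \leq_L \ell_c <_L \ell_d \leq_L \ell_a$, impossible in a linear order.

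For the converse, assume $\bf P$ has no $\bf 2 \oplus 2$, and for $p \in P$ set $D(p) = \{q \in P : q <_P p\}$ and $U(p) = \{q \in P : p <_P q\}$. My first step is the combinatorial fact that $\{D(p) : p \in P\}$ is linearly ordered by $\subseteq$ (and dually $\{U(p) : p \in P\}$ by reverse inclusion). Otherwise, witnesses $q \in D(p) \setminus D(p')$ and $q' \in D(p') \setminus D(p)$ give $q <_P p$, $q' <_P p'$, $q \not<_P p'$, $q' \not<_P p$; transitivity of $<_P$ applied to these two given comparabilities rules out every other $\leq_P$-relation among the four elements (for example, $p \leq_P q'$ would yield $p <_P p'$ and hence $q <_P p'$, a contradiction) and also forces distinctness, so $\{q, p, q', p'\}$ is a $\bf 2 \oplus 2$.

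Next I would construct the representation. Let $L_0$ be the set of formal symbols $\{D_p, U_p : p \in P\}$, quasi-ordered by $D_p \leq D_{p'} \Leftrightarrow D(p) \subseteq D(p')$, $U_p \leq U_{p'} \Leftrightarrow U(p) \supseteq U(p')$, and $U_p \leq D_{p'} \Leftrightarrow p <_P p'$ (with $D_{p'} < U_p$ otherwise); quotienting by the induced equivalence ($D_p \sim D_{p'}$ iff $D(p) = D(p')$, and similarly for the $U$'s) produces a linear order. The nontrivial verification is transitivity in the mixed cases: for example, from $D_p \leq U_{p'} \leq D_{p''}$ (meaning $p' \not<_P p$ and $p' <_P p''$) one must derive $D_p \leq D_{p''}$, and in the contrapositive any witness $q \in D(p) \setminus D(p'')$ yields a $\bf 2 \oplus 2$ on $\{q, p, p', p''\}$ once a few degenerate coincidences are settled. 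Finally, to obtain nonempty open intervals, replace $L_0$ by $L := L_0 \times \mathbb{Q}$ with lex order and identify $L_0$ with its image $L_0 \times \{0\} \subseteq L$; define $f(p)$ to be the open interval of $L$ with endpoints $D_p$ and $U_p$. Density of $L$ between any two embedded elements guarantees $f(p)$ is nonempty, and the interval condition follows from the defining clause $U_p \leq D_{p'} \Leftrightarrow p <_P p'$: in the converse direction, when $p \not<_P p'$ one has $D_{p'} < U_p$ strictly in $L_0$, so the element $(D_{p'}, 1) \in L_0 \times \mathbb{Q}$ lies in $f(p) \cap f(p')$, blocking the interval condition.

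The main obstacle I expect is the mixed-case transitivity in the third paragraph: this is the one place where the no-$\bf 2 \oplus 2$ hypothesis is genuinely used in the construction, and the degenerate cases (where some of $p, p', p''$ coincide, or the corresponding $D$'s or $U$'s coincide) must be handled by hand to ensure the quasi-order is transitive throughout.
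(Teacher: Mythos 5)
The paper does not prove this theorem at all --- it is quoted with attributions to Fishburn and Mirkin and used as a black box --- so there is no in-paper argument to compare against. Your proposal is essentially the classical Fishburn proof via down-sets, and it is sound: the forward direction's four-point cycle $\ell_a <_L \ell_b \leq_L \ell_c <_L \ell_d \leq_L \ell_a$ is exactly right (note that nonemptiness of the four intervals is automatic from the biconditional in the definition of interval representation); the lemma that absence of $\bf 2 \oplus 2$ linearly orders $\{D(p)\}$ under inclusion is correctly reduced to exhibiting $\{q,p,q',p'\}$ as a $\bf 2\oplus 2$; and the mixed transitivity cases you flag as the crux ($D_p \leq U_{p'} \leq D_{p''}$ and its dual) do go through --- any witness $q \in D(p) \setminus D(p'')$ together with $p' \not<_P p$, $p' <_P p''$ yields, after the routine eliminations, a $\bf 2\oplus 2$ on $\{q,p,p',p''\}$ with chains $q<_P p$ and $p' <_P p''$.

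The one genuine (though local and easily repaired) error is in the last step. You claim that whenever $p \not<_P p'$ the point $(D_{p'},1)$ lies in $f(p)\cap f(p')$. Membership in $f(p)=(D_p,U_p)$ requires $D_p \leq D_{p'}$ in $L_0$, which fails precisely when $D(p') \subsetneq D(p)$; in particular if $p' <_P p$ the two intervals are disjoint and \emph{no} common point exists. The fix is to split the case $p \not<_P p'$: if $p' <_P p$, the already-proved forward direction puts all of $f(p')$ strictly below all of $f(p)$, and nonemptiness of both intervals then defeats the condition ``every point of $f(p)$ is below every point of $f(p')$''; if $p$ and $p'$ are incomparable, then both $D_{p'} < U_p$ and $D_p < U_{p'}$ hold in $L_0$, and the point $(\max(D_p,D_{p'}),1)$ (the maximum existing by your linearity lemma) lies in both intervals. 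With that adjustment the construction is complete.
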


If one recasts the condition of not containing a suborder of type $\bf 2 \oplus 2$ as
$$
(\forall p_0,p_1,p_2,p_3 \in P)[p_0 \not<_P p_1 \vee p_2 \not<_P p_3 \vee  p_0 \leq_P p_3 \vee p_2 \leq_P p_1],
$$
then the following definition and theorem provide a similar algebraic characterization of saturation.

\begin{defn}[\cite{Su2}, Definitions 5 and 6]\label{D:fan}
Let ${\bf P} = (P, \leq_P)$ be a finite partial order.
\begin{enumerate}
\item A {\em fan} in $\bf P$ is a subset $F$ of $P$ with at least two elements such that $\max F$ exists under $\leq_P$ and such that no elements of $F - \{\max F\}$ are pairwise $\leq_P$-comparable.
\item Two fans $F_0$ and $F_1$ in $\bf P$ are {\em parallel} if no element of $F_0$ is $\leq_P$-comparable with any element of $F_1$.
\item Two parallel fans $F_0$ and $F_1$ in $\bf P$ are {\em skewly topped} if there exists some $m \in P$ and some $i \in \{0,1\}$ such that
\begin{enumerate}
\item $m \geq_P \max F_i$,
\item $m \ngeq_P \max F_{1-i}$,
\item and $m \geq_P p$ for all $p \in F_{1-i} - \{\max F_{1-i}\}$.
\end{enumerate}
\end{enumerate}
\end{defn}

\begin{thm}[Suck \cite{Su2}, Theorem 5]\label{T:Suck}
A finite partial order is saturated if and only if every two parallel fans in it are skewly topped.
\end{thm}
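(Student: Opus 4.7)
My plan is to prove both directions by contraposition, using the map $\alpha_\varphi\colon P\to\bigcup_{p\in P}\varphi(p)$ associated to a parsimonious $\varphi$ as introduced after Definition~\ref{D:set_finite}. Two preliminary observations drive everything. First, $\alpha_\varphi$ is always surjective: given $q\in\bigcup_p\varphi(p)$, a $\leq_P$-minimal $p^*$ of $\{p:q\in\varphi(p)\}$ satisfies $q=\alpha_\varphi(p^*)$, since any predecessor of $p^*$ misses $q$. Hence $|P|\geq|\bigcup_p\varphi(p)|$ in general, and $\mathbf P$ fails to be saturated precisely when $\alpha_\varphi$ is non-injective for some parsimonious $\varphi$. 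Second, $\alpha_\varphi(p_1)=\alpha_\varphi(p_2)$ with $p_1\neq p_2$ forces $p_1,p_2$ to be $\leq_P$-incomparable, since otherwise the common value would lie in $\bigcup_{r<_P p_j}\varphi(r)$ for the larger $p_j$.

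For the contrapositive of the ``only if'' direction, I would take parallel fans $F_0,F_1$ that are not skewly topped, write $m_i=\max F_i$ and $A_i=F_i\setminus\{m_i\}$, and merge $m_0$ and $m_1$ into a single fresh point $q^*$ by setting
\[
\varphi(p)=\bigl(\pi(p)\setminus\{m_0,m_1\}\bigr)\cup\{q^*:m_0\leq_P p\text{ or }m_1\leq_P p\}.
\]
Then $\bigcup_p\varphi(p)=(P\setminus\{m_0,m_1\})\cup\{q^*\}$ has size $|P|-1$, so non-saturation will follow once $\varphi$ is shown to be a parsimonious set representation. Parsimoniousness is a direct count: $|\varphi(p)|-|\pi(p)|$ equals $-1$ when both $m_0\leq_P p$ and $m_1\leq_P p$ (forcing both strict, since $m_0\perp m_1$) and $0$ otherwise, and the same correction applies to $|\bigcup_{p'<_P p}\varphi(p')|-|\bigcup_{p'<_P p}\pi(p')|$. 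The biconditional $p<_P p'\iff\varphi(p)\subsetneq\varphi(p')$ is routine except when $p,p'$ are incomparable and both lie in $T=\{p:m_0\leq_P p\text{ or }m_1\leq_P p\}$. In that case, if $\varphi(p)\subseteq\varphi(p')$ then $p\in\{m_0,m_1\}$ (otherwise $p\in\pi(p)\setminus\pi(p')$ would yield $p\in\varphi(p)\setminus\varphi(p')$); taking $p=m_0$, from $p'\in T$ and $p'\not\geq_P m_0$ we get $p'\geq_P m_1$, and pulling the inclusion through gives $p'\geq_P a$ for every $a\in A_0$\,---\,precisely a skewly-topped witness, contradicting the hypothesis.

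For the contrapositive of the ``if'' direction, I would fix a parsimonious $\varphi$ with $q^*=\alpha_\varphi(p_1)=\alpha_\varphi(p_2)$ for incomparable $p_1,p_2$, and form
\[
F_0=A_0\cup\{p_1\},\qquad F_1=B\cup\{p_2\},
\]
where $A_0$ (resp.\ $B$) is the set of $\leq_P$-maximal elements of $\{r<_P p_1:r\text{ is incomparable with }p_2\}$ (resp.\ with the roles of $p_1,p_2$ swapped). Nonemptyness of $A_0$ follows since if every $r<_P p_1$ were $<_P p_2$, then $\varphi(p_1)=\{q^*\}\cup\bigcup_{r<_P p_1}\varphi(r)\subseteq\varphi(p_2)$, forcing $p_1\leq_P p_2$; likewise for $B$. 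Parallelism is immediate: any $a\in A_0$ and $b\in B$ with $a\leq_P b$ would give $a<_P p_2$, contradicting the definition of $A_0$. To rule out a skewly-topped witness $m$ for $i=0$, note that $m\geq_P p_1$ yields $q^*\in\varphi(p_1)\subseteq\varphi(m)$; moreover, by maximality of $B$, every $r<_P p_2$ either satisfies $r<_P p_1$ or lies $\leq_P b$ for some $b\in B$, so in either case $\varphi(r)\subseteq\varphi(m)$. Combining, $\varphi(p_2)=\{q^*\}\cup\bigcup_{r<_P p_2}\varphi(r)\subseteq\varphi(m)$, forcing $p_2\leq_P m$, contradicting $m\not\geq_P p_2$; the case $i=1$ is symmetric.

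The main obstacle I anticipate is the set-representation check in the ``only if'' contrapositive: the ``not skewly topped'' hypothesis is engaged in exactly one subcase of the case analysis (two incomparable points of $T$ with $\varphi$-value inclusion), and recognizing this subcase as the skewly-topped configuration is the delicate step.
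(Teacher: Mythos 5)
Your proof is correct and follows essentially the same route as the paper: the paper obtains this theorem as the finite specialization of Theorem~\ref{T:main} (via Proposition~\ref{P:fans_bouquets} and the equivalence of the finite and infinite definitions), and your two constructions --- merging $\max F_0$ and $\max F_1$ into a single fresh point $q^*$ for one direction, and building fans from the maximal incomparable strict predecessors of $p_1,p_2$ for the other --- are exactly the constructions in the paper's proof of Theorem~\ref{T:main}, just carried out directly with fans and the cardinality-based definitions (your surjectivity observation for $\alpha_\varphi$ playing the role of the paper's unproved bridging proposition).
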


We can now state the questions of Suck mentioned above.

\begin{quest}[Suck]\label{Q:Suck}
\
\begin{enumerate}
\item Does (some suitable analog of) Theorem \ref{T:Suck} hold for infinite partial orders?
\item If so, what are the set theoretic axioms necessary to carry out its proof? 
\end{enumerate}
\end{quest}

\noindent The second part is inspired by the work of Marcone \cite{Ma}, who investigated the reverse mathematical content of Theorem \ref{T:intchar}.  We refer the reader to Section \ref{S:RM} for a brief introduction to reverse mathematics, and Simpson \cite{Si} for a complete reference.  In the next section we give an affirmative answer to part (1) of Question \ref{Q:Suck}, and in Section \ref{S:RM} we consider possible answers to part (2).

\section{Infinite saturated orders}

In this section we formulate the concept of saturation for infinite partial orders and prove an analog of Theorem \ref{T:Suck}.  To begin, notice that set representations can be defined for infinite orders just as for finite ones.  The other parts of Definition \ref{D:set_finite}, however, need to be appropriately adjusted to the infinite setting.

\begin{defn}\label{D:set_infinite}
Let ${\bf P} = (P, \leq_P)$ be a partial order.
\begin{enumerate}
\item A set representation $\varphi : P \to \mathcal{P}(Q)$ of $\bf P$ is {\em parsimonious} if for all $p \in P$
\begin{enumerate}
\item $\left| \varphi(p)  - \bigcup_{p' <_P p} \varphi(p') \right| = 1$,
\item and for all $q \in \varphi(p)$, $\{q\} = \varphi(p') - \bigcup_{p'' < p'} \varphi(p'')$ for some $p' \leq_P p$.
\end{enumerate}
\item Given a parsimonious set representation $\varphi$ of $\bf P$, define $\alpha_\varphi : P \to Q$ by $\alpha_\varphi(p) = q$ for $p \in P$ if and only if $\{q\} = \varphi(p) - \bigcup_{p' <_P p} \varphi(p')$.
\item $\bf P$ is {\em saturated} if and only if $\alpha_\varphi$ is injective for all parsimonious set representations $\varphi$ of $\bf P$.
\end{enumerate}
\end{defn}

It is not difficult to check that for finite partial orders the new definitions agree with the old:

\begin{prop}
Let ${\bf P} = (P, \leq_P)$ be a finite partial order.
\begin{enumerate}
\item A set representation of $\bf P$ is parsimonious according to Definition \ref{D:set_finite} if and only if it is parsimonious according to Definition \ref{D:set_infinite}.
\item $\bf P$ is saturated according to Definition \ref{D:set_finite} if and only if it is saturated according to Definition \ref{D:set_infinite}.
\end{enumerate}
\end{prop}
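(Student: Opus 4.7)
The plan is to handle the two parts separately, with part (2) reducing to part (1) together with a simple counting observation about $\alpha_\varphi$.

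For part (1), the crucial preliminary is that any set representation $\varphi : P \to \mathcal{P}(Q)$ satisfies $\bigcup_{p' <_P p} \varphi(p') \subseteq \varphi(p)$, since $p' <_P p$ forces $\varphi(p') \subset \varphi(p)$. Under this containment, the cardinality identity in Definition \ref{D:set_finite} is plainly equivalent to clause (a) of Definition \ref{D:set_infinite}. Thus the only real content of part (1) is deriving clause (b) of the new definition from the old one. I would establish this by well-founded induction on $p \in P$: given $q \in \varphi(p)$, either $q \notin \bigcup_{p' <_P p} \varphi(p')$, in which case clause (a) identifies $\{q\}$ as the required singleton at $p' = p$, or $q \in \varphi(p'')$ for some $p'' <_P p$, in which case the inductive hypothesis at $p''$ yields a suitable $p' \leq_P p'' <_P p$.

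For part (2), the guiding observation is that clause (b) of the new parsimony condition is precisely the statement that $\alpha_\varphi$ maps $P$ onto $\bigcup_{p \in P} \varphi(p)$. Since part (1) tells us the two notions of parsimony coincide on finite orders, we are quantifying over the same family of representations in either version of saturation. For each such $\varphi$, finiteness implies that the surjection $\alpha_\varphi : P \to \bigcup_{p \in P} \varphi(p)$ is injective if and only if $|P| = |\bigcup_{p \in P} \varphi(p)|$. Quantifying over $\varphi$ then yields the equivalence of the two saturation conditions.

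The only step requiring any care is the induction for clause (b); it is routine but worth writing out, since it is the one place where the finiteness (or at least well-foundedness) of $\bf P$ is genuinely invoked. Everything else is bookkeeping about cardinalities of finite sets and the definitions of the relevant maps.
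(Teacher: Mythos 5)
Your proof is correct. The paper offers no argument for this proposition---it is introduced with ``It is not difficult to check''---so there is no proof of record to compare against; your write-up is a legitimate filling-in of the omission, and its decomposition is the natural one. The containment $\bigcup_{p' <_P p}\varphi(p') \subseteq \varphi(p)$ reduces the cardinality identity of Definition \ref{D:set_finite} to clause (1a) of Definition \ref{D:set_infinite}; the well-founded induction is exactly the right device for extracting clause (1b); and reading clause (1b) as surjectivity of $\alpha_\varphi$ onto $\bigcup_{p \in P}\varphi(p)$ turns part (2) into the observation that a surjection of finite sets is injective if and only if the cardinalities agree. One small point deserves to be made explicit: the step asserting that the cardinality identity is ``plainly equivalent'' to clause (1a) uses that $\varphi(p)$ is a finite set, which Definition \ref{D:set_finite} does not assume outright (only $P$ is required to be finite; $Q$ and hence the values $\varphi(p)$ could a priori be infinite, and for infinite $\varphi(p)$ the identity $\left|\varphi(p)\right| = \left|\bigcup_{p' <_P p}\varphi(p')\right| + 1$ no longer forces the difference to be a singleton). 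This is harmless: the same well-founded induction you invoke for clause (1b) shows that old-style parsimony forces each $\varphi(p)$ to be finite, starting from the minimal elements of $P$, where $\varphi(p)$ must be a singleton. But as written the equivalence is asserted before that fact is available, so the finiteness induction should be recorded first.
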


%\begin{proof}
%For (1), let $\varphi: P \to \mathcal{P}(Q)$ be a set representation of $\bf P$ that is parsimonious according to Definition \ref{D:set_finite}.  We claim that for every $p \in P$ and every $q \in \varphi(p)$, there exists $p' \leq_P p$ such that $\{q\} = \varphi(p') - \bigcup_{p'' <_P p'} \varphi(p'')$.  So fix $p \in P$ and $q \in \varphi(p)$, and let $n \geq 0$ be the number of $\leq_P$-predecessors of $p$ in $P$.  Assume by induction that our claim holds for all elements with strictly less than $n$ many $\leq_P$-predecessors.  If $q \notin \bigcup_{p' <_P p} \varphi(p')$, then parsimony according to Definition \ref{D:set_finite} implies that $\{q \} = \varphi(p) - \bigcup_{p' < p} \varphi(p')$, as desired.  Otherwise, fix $p_0 <_P p$ such that $q \in \varphi(p_0)$.  Since $p_0$ has strictly less than $n$ many $\leq_P$-predecessors, it follows that $\{q \}  = \varphi(p') - \bigcup_{p'' < p'} \varphi(p'')$ for some $p' \leq_P p_0$.  Since $p' \leq_P p$, this proves our claim.

%To prove (2), note that if $\varphi : P \to \mathcal{P}(Q)$ is any set representation of $\bf P$, then $\alpha_\varphi : P \to \bigcup_{p \in P} \varphi(p)$ is a surjection (a fact which holds for all partial orders, finite and infinite; see Lemma 1 of \cite{Su2}).  Hence if $\bf P$ is finite, $|P| = \left|\bigcup_{p \in P} \varphi(p) \right|$ if and only if $\alpha_\varphi$ is injective.
%\end{proof}

\noindent In particular, the argument given following Definition \ref{D:set_finite} holds verbatim for infinite partial orders as long as parsimony and saturation are understood according to Definition \ref{D:set_infinite}.  Thus infinite saturated orders admit only one parsimonious set representation, and so the preceding definition does indeed capture the ``spirit'' of the concept.

We next generalize the notion of fan from Definition \ref{D:fan}; we shall see at the end of the section why fans alone would not suffice.

\begin{defn}\label{D:bouquet}
A {\em bouquet} in $\bf P$ is a subset $B$ of $P$ with at least two elements such that $\max B$ exists under $\leq_P$.
\end{defn}

%\begin{defn}
%Let ${\bf P} = (P, \leq_P)$ be a partial order.
%\begin{enumerate}
%\item A {\em bouquet} in $\bf P$ is a subset $B$ of $P$ with at least two elements such that $\max B$ exists.
%\item Two fans $B_0$ and $B_1$ in $\bf P$ are {\em parallel} if no element of $B_0$ is $\leq_P$-comparable with any element of $B_1$.
%\item Two parallel fans $B_0$ and $B_1$ in $\bf P$ are {\em skewly topped} if there exists some $m \in P$ and some $i \in \{0,1\}$ such that
%\begin{enumerate}
%\item $m >_P \max B_i$,
%\item $m \ngeq_P \max B_{1-i}$,
%\item and $m >_P p$ for all $p \in B_{1-i}$.
%\end{enumerate}
%\end{enumerate}
%\end{defn}

\noindent We define what it means for two bouquets to be {\em parallel} and {\em skewly topped} just as for fans.  If ${\bf P}$ is finite, or even just a partial order in which every element has only finitely many $\leq_P$-successors, then every two parallel bouquets $B_0$ and $B_1$ can be replaced by parallel fans $F_0$ and $F_1$ with the same respective maxima.  Namely, let $F_i = \{ b \in B_i : (\forall b' \geq_P b )[b' \in B_i \implies b' = \max B_i]\}$ for each $i$.  Then an element of $P$ skewly tops $B_0$ and $B_1$ if and only if it skewly tops $F_0$ and $F_1$, and conversely.  Thus we have:
\begin{prop}\label{P:fans_bouquets}
If ${\bf P} = (P,\leq_P)$ is a finite partial order then every two parallel fans in $\bf P$ are skewly topped if and only if every two parallel bouquets in $\bf P$ are skewly topped.
\end{prop}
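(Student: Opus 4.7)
The forward implication is free: every fan is a bouquet, so assuming that every two parallel bouquets in $\bf P$ are skewly topped gives the same conclusion for fans. The content of the proposition is therefore the converse.

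My plan for the converse is to use the construction already suggested in the paragraph just before the statement: given parallel bouquets $B_0, B_1$, define
\[
F_i = \{b \in B_i : (\forall b' \geq_P b)[b' \in B_i \to b' = \max B_i]\} \quad \text{for } i \in \{0,1\},
\]
and then verify (i) each $F_i$ is a fan with $\max F_i = \max B_i$, (ii) $F_0$ and $F_1$ are parallel, and (iii) an element $m \in P$ skewly tops $B_0, B_1$ with index $i$ if and only if it skewly tops $F_0, F_1$ with the same $i$. Item (ii) is immediate from $F_i \subseteq B_i$. For (i), $\max B_i$ manifestly belongs to $F_i$ as its maximum; a second element is produced by taking any $b \in B_i \setminus \{\max B_i\}$ and passing to a $\leq_P$-maximal element $b^*$ of the finite nonempty set $\{b' \in B_i : b' \geq_P b,\ b' \neq \max B_i\}$, which then lies in $F_i$ by construction. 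The antichain condition on $F_i \setminus \{\max B_i\}$ follows directly from the defining formula: two non-top elements $b \leq_P b'$ of $F_i$ would force $b' = \max B_i$, contradiction.

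The substantive step is (iii). The direction "bouquets skewly topped $\Rightarrow$ fans skewly topped" is by restriction, since $F_{1-i} \setminus \{\max F_{1-i}\} \subseteq B_{1-i} \setminus \{\max B_{1-i}\}$ and the two pairs share their maxima. For the opposite direction the key observation I would establish is that every non-top element $p$ of $B_{1-i}$ is dominated by some non-top element $p^*$ of $F_{1-i}$, obtained by the same maximal-promotion trick as in (i) applied to $p$ in $B_{1-i}$; the skew topping hypothesis for $F_0, F_1$ then gives $m \geq_P p^* \geq_P p$, which combined with $m \geq_P \max B_i$ and $m \ngeq_P \max B_{1-i}$ shows $m$ skewly tops $B_0, B_1$.

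The main (and essentially only) obstacle is the existence of the promoted maxima $b^*$ and $p^*$. This is precisely where the finiteness of $\bf P$ is used; in fact the same argument goes through under the weaker hypothesis that every element of $P$ has only finitely many $\leq_P$-successors, which is exactly the generalization flagged in the text preceding the proposition. Once this point is in hand, the remainder is bookkeeping.
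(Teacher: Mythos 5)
Your proposal is correct and follows exactly the route the paper takes (in the paragraph preceding the proposition): pass from parallel bouquets $B_0,B_1$ to the parallel fans $F_i$ of maximal non-top elements, and transfer skew topping back and forth, with finiteness (or finitely many successors) used only to guarantee the maximal elements exist. You in fact supply more detail than the paper does, which states the construction and the equivalence of skew topping without further verification.
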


%\begin{proof}
%Since every fan is a bouquet, we have only to prove the right-to-left direction.  So assume that every two parallel fans in $\bf P$ are skewly topped, and let $B_0$ and $B_1$ be a pair of parallel bouquets.  For each $i \in \{0,1\}$, let
%$$
%F_i = \{ b \in B_i : (\forall b' \geq_P b )[b' \in B_i \implies b' = \max B_i]\},
%$$
%which is well-defined since every element of $P$ has only finitely many $\leq_P$-successors.  Then $F_0$ and $F_1$ are parallel fans with respective maxima $\max B_0$ and $\max B_1$.  Let $m \in P$ skewly top the two, say with $m >_P \max F_i = \max B_i$.  Then $m \ngeq_P \max F_{1-i} = \max B_{1-i}$ but $b <_P m$ for every $b \in B_{1-i} - \{\max B_{1-i}\}$ because for every such $b$ there exists $b' \in F_{1-i}$ with $b \leq_P b' <_P m$.  Hence, $m$ skewly tops $B_0$ and $B_1$.
%\end{proof}

The following is the analog for infinite partial orders of Theorem \ref{T:Suck}.  Along with the preceding two propositions it also gives an alternative proof of that theorem, Suck's original one having been by induction on the size of the partial order.

\begin{thm}\label{T:main}
A partial order is saturated if and only if every two parallel bouquets in it are skewly topped.
\end{thm}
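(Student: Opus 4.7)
The plan is to prove both directions of the equivalence by contrapositive, matching bouquets directly with the structure of parsimonious set representations.

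For the necessity direction, I would suppose $\alpha_\varphi(p_0) = \alpha_\varphi(p_1) = q$ for some parsimonious $\varphi \colon P \to \mathcal{P}(Q)$ and distinct $p_0, p_1 \in P$. A short argument shows that $p_0$ and $p_1$ are $\leq_P$-incomparable, since otherwise $q$ would lie in $\bigcup_{p <_P p_i} \varphi(p)$ for one of the $p_i$, contradicting the defining property of $\alpha_\varphi$. I then set $B_i = \{r \leq_P p_i : r \not\leq_P p_{1-i}\}$, which has maximum $p_i$. Each $B_i$ contains at least two elements: any $q_i \in \varphi(p_i) - \varphi(p_{1-i})$ (nonempty by injectivity of $\varphi$ and the incomparability of $p_0, p_1$) is, by parsimony condition~(b), equal to $\alpha_\varphi(r_i)$ for some $r_i \leq_P p_i$, and a direct check shows $r_i \in B_i - \{p_i\}$. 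Parallelism of $B_0$ and $B_1$ is immediate from their definitions. Finally, if $B_0, B_1$ were skewly topped by some $m$ with, say, index $0$, then the inclusions $\varphi(p_0) \subseteq \varphi(m)$ and $\varphi(b) \subseteq \varphi(m)$ for every $b \in B_1 - \{p_1\}$ would combine to give $\varphi(p_1) \subseteq \varphi(m)$; injectivity of $\varphi$ then forces $p_1 \leq_P m$, contradicting $m \not\geq_P p_1$.

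For the sufficiency direction, working contrapositively, given parallel bouquets $B_0, B_1$ with maxima $m_0, m_1$ that are not skewly topped, I would build $\varphi$ from the principal ideal representation by identifying $m_0$ and $m_1$ with a single new symbol $c$: set $\varphi(p) = (\pi(p) - \{m_0, m_1\}) \cup \{c\}$ whenever $m_0 \leq_P p$ or $m_1 \leq_P p$, and $\varphi(p) = \pi(p)$ otherwise. Partitioning $P$ according to which of $m_0, m_1$ lie below $p$, one checks routinely that $\varphi$ is injective and parsimonious with $\alpha_\varphi(m_0) = \alpha_\varphi(m_1) = c$, so the failure of saturation will follow as soon as $\varphi$ is shown to be a set representation.

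The main obstacle is verifying the implication $\varphi(p) \subsetneq \varphi(p') \Rightarrow p <_P p'$, since the identification of $m_0$ and $m_1$ could in principle create spurious inclusions. For $p \notin \{m_0, m_1\}$ the element $p$ itself lies in $\varphi(p)$ and is distinct from $c, m_0, m_1$, which forces $p \in \pi(p')$ and hence $p \leq_P p'$; so the only way the implication can fail is when $p = m_0$ and $p'$ satisfies $m_1 \leq_P p'$ but $m_0 \not\leq_P p'$, or the symmetric case with indices swapped. But in that situation $\varphi(m_0) \subseteq \varphi(p')$ would force every predecessor of $m_0$ to lie $\leq_P$-below $p'$; in particular every $b \in B_0 - \{m_0\}$ would, exhibiting $p'$ as a skew topping of $B_0, B_1$ with index $1$. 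This contradicts the hypothesis, so no such spurious inclusion arises, $\varphi$ is a valid set representation, and $\mathbf{P}$ is therefore not saturated.
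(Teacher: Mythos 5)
Your proof is correct and follows essentially the same route as the paper: the same bouquets $\{r \leq_P p_i : r \not\leq_P p_{1-i}\}$ (the paper's $I_i \cup \{p_i\}$) extracted from a failure of injectivity of $\alpha_\varphi$, and the same representation identifying $\max B_0$ and $\max B_1$ with one new symbol, with the non-skew-topping hypothesis invoked at exactly the same spot (ruling out spurious inclusions $\varphi(p) \subseteq \varphi(p')$ when $p'$ sits above only one of the two maxima). The only quibbles are that your labels ``necessity'' and ``sufficiency'' are swapped relative to the statement, and that the steps you call routine (injectivity, parsimony, the forward implication) occupy most of the paper's case analysis, though they do go through as you claim.
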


\begin{proof}
%[Proof of Theorem \ref{T:main}]

 ($\Longrightarrow $) Suppose $B_0$ and $B_1$ are two parallel bouquets in $\bf P$ that are not skewly topped.  Let $q^*$ be a symbol not in $P$, and let $Q = P \cup \{q^*\} - \{\max B_0,\max B_1\}$.  Let $\pi$ be the principal ideal representation of $\bf P$, and define $\varphi : P \to \mathcal{P}(Q)$ as follows.  If $p \geq_P \max B_0$ or $p \geq_P \max B_1$ let $\varphi(p) = \pi(p) \cup \{q^*\} - \{\max B_0, \max B_1\}$, and otherwise let $\varphi(p) = \pi(p)$.   We claim, first of all, that $\varphi$ is a set representation.  So fix distinct $p_0,p_1 \in P$ and note that if $p_0 \ngeq_P \max B_0, \max B_1$ and $p_1 \ngeq_P \max B_0, \max B_1$ then $\varphi(p_i) = \pi(p_i)$ for each $i$, meaning $\varphi(p_0) \neq \varphi(p_1)$ and $p_i <_P p_{1-i}$ if and only if $\varphi(p_i) \subset \varphi(p_{1-i})$.  This leaves the following cases to consider.

\medskip
\noindent {\em Case 1: for some $i, j \in \{0,1\}$,}
\begin{itemize}
\item $p_i \geq_P \max B_j$,
\item $p_{1-i} \ngeq_P \max B_0, \max B_1$.
\end{itemize}
Clearly $\varphi(p_0) \neq \varphi(p_1)$ since $q^* \in \varphi(p_i)$ and $q^* \notin \varphi(p_{1-i})$.  If $p_0$ and $p_1$ are $\leq_P$-comparable, it must be that $p_{1-i} <_P p_{i}$, so $\varphi(p_{1-i}) = \pi(p_{1-i}) \subset \pi(p_i)$.  And since $p_{1-i} \ngeq \max B_0, \max B_1$ we have $\max B_0,\max B_1 \notin \pi(p_{1-i})$, implying that $\varphi(p_{1-i})  \subseteq \pi(p_i) - \{\max B_0, \max B_1\} \subset \varphi(p_i)$.  Conversely, if $\varphi(p_0)$ and $\varphi(p_1)$ are comparable under inclusion, it must be that $\varphi(p_{1-i}) \subset \varphi(p_i)$.  Thus $\pi(p_{1-i}) \subseteq \varphi(p_i) - \{q^*\} \subseteq \pi(p_i)$. However, it cannot be that $\pi(p_{1-i}) = \pi(p_i)$ since this would mean that $\max B_j \leq_P p_{1-i}$, so we must have $\pi(p_{1-i}) \subset \pi(p_i)$ and hence $p_{1-i} <_P p_i$.

\medskip
\noindent {\em Case 2: for some $i,j \in \{0,1\}$,}
\begin{itemize}
\item $p_i \geq_P \max B_j$,
\item $p_i \ngeq_P \max B_{1-j}$,
\item $p_{1-i} \geq_P \max B_{1-j}$,
\item $p_{1-i} \ngeq_P \max B_{j}$.
\end{itemize}
 In this case we clearly cannot have $p_{1-i} <_P p_i$ or $p_i <_P p_{1-i}$.  We show that neither $\varphi(p_{1-i}) \subseteq \varphi(p_i)$ nor $\varphi(p_i) \subseteq \varphi(p_{1-i})$ can obtain.  Indeed, suppose it were the case that $\varphi(p_{1-i}) \subseteq \varphi(p_i)$ (the other case being symmetric).  Then every $p \in B_{1-j} - \{\max B_{1-j}\}$, being an element of $\pi(p_{1-i})$, would belong to $\varphi(p_i)$ and, not being $q^*$, also to $\pi(p_i)$.  Thus, we would have $p \leq_P p_i$, so $p_i$ would skewly top $B_0$ and $B_1$, a contradiction.

% Thus, if $p_i >_P \max B_j$ or $p_{1-i} >_P \max B_{1-j}$ then $p_i \in \varphi(p_i) - \varphi(p_{1-i})$ or $p_{1-i} \in \varphi(p_{1-i}) - \varphi(p_i)$, meaning $\varphi(p_{i}) \neq \varphi(p_{1-i})$.  And if $p_i = \max B_j$ and $p_{1-i} = \max B_{1-j}$, then $\varphi(p_i) \neq \varphi(p_{1-i})$ because the elements of $B_j - \{\max B_j\}$ belong to $\varphi(p_i)$ but not to $\varphi(p_{1-i})$.

%It remains to check that neither $\varphi(p_{1-i}) \subset \varphi(p_i)$ nor $\varphi(p_i) \subset \varphi(p_{1-i})$ can obtain.  Indeed, suppose it were the case that $\varphi(p_{1-i}) \subset \varphi(p_i)$.  Then every $p \in B_{1-j} - \{\max B_{1-j}\}$, being an element of $\pi(p_{1-i})$, would belong to $\varphi(p_i)$ and, not being $q^*$, also to $\pi(p_i)$.  Thus, $p_i$ would skewly top $B_0$ and $B_1$, a contradiction.  And a similar argument could be made if $\varphi(p_i) \subset \varphi(p_{1-i})$ held.

\medskip
\noindent {\em Case 3: for some $j \in \{0,1\}$, $p_0,p_1 \geq_P \max B_j$.} Since $p_0$ and $p_1$ are distinct, we must have $p_i >_P \max B_j$ for some $i \in \{0,1\}$.  Since $\max B_0$ and $\max B_1$ are $\leq_P$-incomparable, this means that $p_i \in \varphi(p_i)$.  So if $p_i \notin \varphi(p_{1-i})$ then $\varphi(p_i) \neq \varphi(p_{1-i})$.  And if $p_i \in \varphi(p_{1-i})$ then $p_i <_P p_{1-i}$ and hence $p_{1-i} \in \varphi(p_{1-i}) - \varphi(p_i)$, so again $\varphi(p_i) \neq \varphi(p_{1-i})$.  Now suppose $p_{1-i} <_P p_{i}$ for some $i$, so that $\pi(p_{1-i}) \subset \pi(p_i)$.  Then as $\varphi(p_0) = \pi(p_0) \cup \{q^*\} - \{\max B_0, \max B_1 \}$ and $\varphi(p_1) = \pi(p_1) \cup \{q^*\} - \{\max B_0, \max B_1 \}$, we have $\varphi(p_{1-i}) \subseteq \varphi(p_i)$ and hence $\varphi(p_{1-i}) \subset \varphi(p_i)$ since $\varphi(p_{1-i}) \neq \varphi(p_i)$.  Conversely, suppose $\varphi(p_{1-i})  \subset \varphi(p_{i})$.  The only way it could fail to be the case that $\pi(p_{1-i}) \subset \pi(p_i)$ is if $\max B_{1-j} \notin \pi(p_{1-i})$.  But every $p <_P \max B_{1-j}$ belongs to $\varphi(p_{1-i}) - \{q^*\}$ and hence to $\varphi(p_i) - \{q^*\} \subseteq \pi(p_i)$, meaning $p \leq_P p_i$, so if this were the case then $p_i$ would skewly top $B_0$ and $B_1$.  It must thus be that $\pi(p_{1-i}) \subset \pi(p_i)$ and hence that $p_{1-i} <_P p_i$, as desired.

\medskip
Our next claim is that $\varphi$ is parsimonious.  Fixing $p \in P$, we first verify condition (1a) of Definition \ref{D:set_infinite}.   If $p \ngeq_P \max B_0, \max B_1$, then there is nothing to show since $\varphi(p) = \pi(p)$.  If $p = \max B_j$ for some $j \in \{0,1\}$, then $\varphi(p) = \bigcup_{p' <_P p} \varphi(p')  \cup \{q^*\}$ and $q^* \notin \varphi(p')$ for any $p' <_P p$ since necessarily $p' \ngeq_P \max B_0,\max B_1$.  If $p >_P \max B_j$ for some $j$, then $\varphi(p) = \bigcup_{p' <_P p} \varphi(p')  \cup \{p\}$.  In this case, $p \notin \varphi(p')$ for any $p' <_P p$ since as $p \neq q^*$ this would mean that $p \in \pi(p')$ and hence that $p \leq_P p' <_P p$.  In any case, then, $\left| \varphi(p) - \bigcup_{p' <_P p} \varphi(p') \right| = 1$.

We now verify condition (1b) of Definition \ref{D:set_infinite}.  Given $q \in \varphi(p)$, we either have that $q = q^*$ and $\max B_j \leq_P p$ for some $j \in \{0,1\}$, or that $q \in P$ and $q \leq_P p$.  If we apply the argument just given to $q$ instead of to $p$  then it follows that in the former case $\{q\} = \varphi(\max B_j) - \bigcup_{p' <_P \max B_j} \varphi(p')$, and that in the latter case $\{q\} = \varphi(q) - \bigcup_{p' <_P q} \varphi(p')$.

Finally, it follows that $\bf P$ is not saturated.  Indeed, as the preceding argument shows, $\alpha_\varphi (\max B_0) = q^* = \alpha_\varphi(\max B_1)$.  Hence, $\alpha_\varphi$ is not injective.

\medskip
\noindent ($\Longleftarrow$) Fix a partial order ${\bf P} = (P, \leq_P)$.  Fix a parsimonious set representation $\varphi: P \to \mathcal{P}(Q)$ and suppose that $\alpha_\varphi$ is not injective, so that $\alpha_\varphi(p_0) = \alpha_\varphi(p_1)$ for some distinct $p_0,p_1 \in P$.  Then by definition of $\alpha_\varphi$, it follows that $p_0$ and $p_1$ are $\leq_P$-incomparable and not minimal in $P$.  For $i = 0,1$, let $I_i$ be the set of all $p <_P p_i$ in $P$ which are $\leq_P$-incomparable with $p_{1-i}$, and let $C_i$ consist of all $p <_P p_i$ in $P$ which are $\leq_P$-comparable with $p_{1-i}$.  Note that necessarily $p <_P p_{1-i}$ for all $p \in C_i$.  This implies that each $I_i$ must be nonempty as otherwise we would have $\varphi(p) \subset \varphi(p_{1-i})$ for all $p <_P p_i$ by virtue of $\varphi$ being a set representation, which would mean that $\varphi(p_i) \subseteq \varphi(p_{i-1})$ and hence that $p_i \leq_P p_{1-i}$.

Thus, $I_0 \cup \{p_0\}$ and $I_1 \cup \{p_1\}$ are parallel bouquets in $\bf P$ with $p_0$ and $p_1$ as their respective maxima.  Now suppose $m \in P$ and $i \in \{0,1\}$ is such that $p_i <_P m$ and $p <_P m$ for all $p <_P p_{1-i}$.  Then $\alpha_\varphi(p_{1-i}) \in \varphi(p_i) \subset \varphi(m)$ and $\varphi(p) \subset \varphi(m)$ for all $p <_P p_{1-i}$ and thus
$$
\varphi(p_{1-i}) = \{\alpha_\varphi(p_i)\} \cup \bigcup_{p <_P p_{1-i}} \varphi(p) \subseteq \varphi(m),
$$
which gives $p_{1-i} \leq m$.  Thus, $I_0 \cup \{p_0\}$ and $I_1 \cup \{p_1\}$ are not skewly topped.
\end{proof}

The theorem shows why the move from fans in the finite case to bouquets in the infinite case was necessary.  For consider the partial order $\bf P$ with domain 
$$
P = \{l_i : i \in \N\}  \cup \{l\} \cup \{r_i: i \in \N\} \cup \{r\} \cup \{t_i: i \in \N\},
$$
and ordering $\leq_P$ defined by (the transitive closure of) the following: for all $i <_\N j$,
\begin{itemize}
\item $l_i <_P l_j <_P l$,
\item $r_i <_P r_j <_P r <_P t_i <_P t_j$,
\item $l_i <_P t_i$.
\end{itemize} (See Figure \ref{fig}.)
\begin{figure}[htp]
\begin{center}
$$
\xygraph{
!{<0cm,0cm>;<1cm,0cm>:<0cm,0.7cm>::}
!{(0,0) }*+{\bullet}="l0"
!{(0,1) }*+{\bullet}="l1"
!{(0,2)}*+{\vdots }="ld"
!{(0,3) }*+{\bullet}="ss"
!{(-1,0) }*+{l_0}
!{(-1,1) }*+{l_1}
!{(-1,3) }*+{l}
!{(1,0) }*+{\bullet}="r0"
!{(1,1)}*+{\bullet}="r1"
!{(1,2)}*+{\vdots}="rd"
!{(1,3) }*+{\bullet}="ts"
!{(1,4) }*+{\bullet}="t0"
!{(1,5) }*+{\bullet}="t1"
!{(1,6)}*+{\vdots }
!{(2,0) }*+{r_0}
!{(2,1)}*+{r_1}
!{(2,3) }*+{r}
!{(2,4) }*+{t_0}
!{(2,5) }*+{t_1}
"r0"-"r1"
"l0"-"l1"
"l1"-"ld"
"ld"-"ss"
"r1"-"rd"
"rd"-"ts"
"t0"-"t1"
"ts"-"t0"
"t0"-"l0"
"t1"-"l1"
} 
$$
\caption{\,}\label{fig}
\end{center}
\end{figure}
Then if $F_0$ and $F_1$ are parallel fans in $\bf P$, it must be that $|F_0 | = |F_1| = 2$, and that one of the two fans, say $F_0$, only contains elements $\leq_P$-incomparable with $r$, while the other only contains elements $\leq_P$-incomparable with $l$.  Thus either $F_0 = \{l_i, l_j\}$ for some $i <_\N j$, or $F_0 = \{l_i,l\}$ for some $i$.  In either case, $F_1$ must consist of some elements $<_P t_i$, and $t_i$ must consequently skewly top $F_0$ and $F_1$.  On the other hand, $B_0 = \{l_0,l_1,\ldots\} \cup \{l\}$ and $B_1 = \{r_0,r_1,\ldots\} \cup \{r\}$ are parallel bouquets in $\bf P$ which are clearly not skewly topped by any element of $P$.  By the theorem, $\bf P$ is not saturated.

\section{Reverse mathematics}\label{S:RM}

Reverse mathematics is an area of mathematical logic devoted to classifying mathematical theorems according to their proof theoretic strength.  The goal is to calibrate this strength according to how much comprehension is needed to establish the existence of the sets needed to prove the theorem (i.e., according to how complicated the formulas specifying such sets must be allowed to be).  This is a two-step process. The first involves searching for some weak comprehension scheme sufficient to prove the theorem, while the second gives sharpness by showing that the theorem is in fact equivalent to this comprehension scheme.

In practice, we use for these comprehension schemes certain subsystems of second order arithmetic.  As our base theory we use a weak subsystem called $\RCA$ which roughly corresponds to computable or constructive mathematics.  A strictly stronger system is $\WKL$, obtained by adding to the axioms of $\RCA$ the comprehension scheme asserting that every infinite binary tree has an infinite branch, and stronger still is $\ACA$, which adds comprehension for sets described by arithmetical formulas (i.e., formulas whose quantifiers range over only number variables).  Many theorems are known to be either provable in $\RCA$ or else equivalent over $\RCA$ to one of $\WKL$ or $\ACA$; see \cite{Si}, Chapter 1 for a partial list of examples, and for an overview of other subsystems of second order arithmetic.

We turn to analyzing the proof theoretic strength of Theorem \ref{T:main}, assuming familiarity with the subsystems mentioned above.  For interval orders, the equivalences between various set-theoretic and algebraic characterizations were studied in this context by Marcone \cite{Ma}.  For example, it turns out that Theorem \ref{T:intchar} is provable in $\RCA$ (\cite{Ma}, Theorems 2.13 and 4.2), but that other characterizations of interval orders are harder to prove:

\begin{thm}[Marcone \cite{Ma}, Theorem 5.6]
Over $\RCA$, the following are equivalent:
\begin{enumerate}
\item $\WKL$;
\item a partial order is an interval order if and only if it admits an injective interval representation.
\end{enumerate}
\end{thm}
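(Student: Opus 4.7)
The forward direction of the equivalence in (2) (an injective interval representation witnesses $\bf P$ as an interval order) is immediate from the definition. For the nontrivial direction, the plan is to reduce to a bounded K\"onig's lemma instance. Let ${\bf P} = (\N,\leq_P)$ be an interval order and fix some (not necessarily injective) interval representation of ${\bf P}$ ensured by hypothesis; using Theorem~\ref{T:intchar} (provable in $\RCA$ per Marcone), we have no $\bf 2\oplus 2$ suborder. Build a tree $T$ whose level-$n$ nodes are codes of injective interval representations $f_n$ of ${\bf P}\res\{0,\ldots,n-1\}$ using rational open intervals with endpoints drawn from a finite grid depending only on $n$ (e.g.\ fractions with denominator at most $2^n$ inside $[0,n]$), with extension defined in the natural way. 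By Suck's finite result together with a straightforward scaling argument, every finite interval order has an injective representation on such a grid, so $T$ is infinite; since each level is finite with a computable bound, $T$ can be coded as an infinite binary tree, and $\WKL$ produces a path, which glues into an injective interval representation of $\bf P$.

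For the reversal, I would work from the usual $\Sigma^0_1$-separation form of $\WKL$. Given disjoint $\Sigma^0_1$ sets $A,B\subseteq\N$ with computable enumerations, construct a computable partial order $\bf P$ in stages. For each $n$ introduce a small gadget $G_n$ consisting of a few points whose comparability pattern with a fixed pair $\{x_n,y_n\}$ depends on whether $n$ is enumerated into $A$ (making $x_n,y_n$ behave like a collapsed pair in every interval representation) or into $B$ (making them behave separately). The gadgets must be chosen so that $\bf P$ has no $\bf 2\oplus 2$ suborder regardless of which of $A$, $B$ absorbs $n$ (hence $\bf P$ is provably an interval order in $\RCA$ using Theorem~\ref{T:intchar}), but so that any injective interval representation of $\bf P$ must, at stage $n$, commit to intervals whose relative widths distinguish the $A$-case from the $B$-case. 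Reading off this commitment uniformly yields a $\{0,1\}$-valued function separating $A$ from $B$, which over $\RCA$ implies $\WKL$.

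The routine ingredients are the $\RCA$-verifiable scaling lemmas needed to put finite representations onto a common bounded rational grid, and the computable translation between bounded finitely-branching trees and binary trees used to invoke $\WKL$. The main obstacle is the reversal: designing gadgets that are individually $\bf 2\oplus 2$-free \emph{and} remain so after arbitrary unions, while forcing any injective representation to encode the separation. The natural trick is to use ``semiorder'' gadgets of Scott--Suppes type where one pair of points can legitimately receive identical intervals in the non-injective setting, so injectivity is precisely what forces the decoding. Once the gadget has the right local shape, verifying both the global absence of $\bf 2\oplus 2$ and the decoding of a separator is then a bookkeeping exercise.
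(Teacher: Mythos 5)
This statement is not proved in the paper at all: it is quoted verbatim from Marcone \cite{Ma} as background motivation for the reverse-mathematical analysis in Section \ref{S:RM}, so there is no in-paper proof to compare your attempt against. Judged on its own, your outline has the right overall shape (bounded K\"onig's lemma for the forward direction, $\Sigma^0_1$ separation for the reversal), but it is not a proof, and the gaps are in exactly the places where the work lives. The most serious one is the reversal. You describe the gadgets only by the list of properties they must have --- individually and jointly $\bf 2 \oplus 2$-free, yet forcing every \emph{injective} representation to reveal whether $n$ entered $A$ or $B$ --- and you explicitly label producing them ``the main obstacle.'' That obstacle is the theorem. Moreover, the needle to be threaded is finer than you acknowledge: if your comparability pattern forces the decoding bit in \emph{every} interval representation (which is what naturally happens when, say, $x_n <_P z$ and $y_n$ is incomparable to $z$, since then some point of $f(y_n)$ must lie above all of $f(x_n)$), then the argument would derive $\Sigma^0_1$ separation from the mere existence of an interval representation, which is incompatible with the $\RCA$-provability of Theorem \ref{T:intchar} cited in this paper. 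So the gadget must force the bit only in the presence of injectivity, and nothing in your sketch demonstrates that such a gadget exists or that its infinite union stays $\bf 2 \oplus 2$-free.

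The forward direction also has a concrete defect: as described, your ``tree'' is not a tree. A level-$(n+1)$ node is an injective representation on the level-$(n+1)$ grid, and its restriction to $\{0,\dots,n-1\}$ generally takes values outside the coarser level-$n$ grid, so it is not a level-$n$ node and ``extension in the natural way'' is undefined; without closure under restriction you cannot invoke K\"onig's lemma. The standard repair is to let a level-$n$ node record only the finite linear ordering of the $2n$ interval endpoints (consistent with ${\bf P}\res n$ and with injectivity) and recover the linear order $L$ from a path, which also disposes of the scaling lemmas. Finally, the appeal to ``Suck's finite result'' is a misattribution: Theorem \ref{T:Suck} concerns saturated orders, fans, and skew topping, and says nothing about interval representations; the fact you actually need is that every finite $\bf 2\oplus 2$-free order admits an injective interval representation, which should be cited (or proved) as such.
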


For our purposes, we begin by formalizing the concept of set representation in the language of second order arithmetic.

\begin{defn}The following definitions are made in $\RCA$. Let ${\bf P} = (P, \leq_P)$ be a partial order.  A {\em set representation} of $\bf P$ is a subset $\varphi$ of $P \times Q$ for some set $Q$ such that if we abbreviate $\{q \in Q : \langle p, q \rangle \in \varphi \}$ by $\varphi(p)$ then for all $p,p' \in P$
\begin{enumerate}
\item $p \neq p' \implies \varphi(p) \neq \varphi(p')$,
\item and $p <_P p' \iff \varphi(p) \subset \varphi(p')$.
\end{enumerate}
\end{defn}

Parsimony is then formalized in a straightforward way, along with all the combinatorial notions from Definitions \ref{D:fan} and \ref{D:bouquet}.  Formalizing saturation, on the other hand, presents us with two options (we deliberately use the same term for both):

\begin{defn}\label{D:sat_rca}
The following definitions are made in $\RCA$.  Let ${\bf P} = (P, \leq_P)$ be a partial order.
\begin{enumerate}
\item $\bf P$ is {\em saturated} if for every parsimonious set representation $\varphi \subseteq P \times Q$ of $\bf P$ it holds that for all $p_0,p_1 \in P$ and all $q_0,q_1 \in Q$, if $p_0 \neq p_1$ and $q_i = \varphi(p_i) - \bigcup_{p' <_P p_i} \varphi(p')$ for each $i \in \{0,1\}$, then $q_0 \neq q_1$.
\item $\bf P$ is {\em saturated} if for every parsimonious set representation $\varphi \subseteq P \times Q$ of $\bf P$, the map $\alpha_\varphi : P \to Q$ exists and is injective.
\end{enumerate}
\end{defn}

In ordinary terms the two definitions are, of course, one and the same.  But in the present context they need not be because the existence of the map $\alpha_\varphi$ may not always be provable in $\RCA$.  The following pair of propositions show that this can indeed happen.  Thus, while formulating saturation according to Definition \ref{D:sat_rca}~(2) may be more natural, the set theoretic assumptions necessary to carry out the proof of Theorem \ref{T:main} become much higher.

\begin{prop}\label{P:weak}
It is provable in $\RCA$ that a partial order is saturated according to Definition \ref{D:sat_rca}~(1) if and only if every two parallel bouquets in it are skewly topped.
\end{prop}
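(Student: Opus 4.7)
The plan is to follow the proof of Theorem~\ref{T:main} almost verbatim, verifying at each step that every set we construct and every witness we extract is available by $\Delta^0_0$-comprehension, i.e., is recursive in the given partial order together with the given set representation. The key observation is that Definition~\ref{D:sat_rca}~(1) never asks for $\alpha_\varphi$ to be realized as a totality; it asks only for local witnesses $q_i$ attached to particular pairs $p_0 \neq p_1$. Consequently the entire combinatorial argument of Theorem~\ref{T:main} can be rendered in $\RCA$, and the proposition is a matter of pointing out which constructions stay within $\Delta^0_0$.

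For the forward direction, assume parallel bouquets $B_0, B_1$ in $\bf P$ that are not skewly topped. The principal ideal representation $\pi$ is $\Delta^0_0$-definable from $\leq_P$, so exists in $\RCA$. From it define $\varphi$ by the case split used in the proof of Theorem~\ref{T:main}: set $\varphi(p) = \pi(p) \cup \{q^*\} - \{\max B_0,\max B_1\}$ when $p \geq_P \max B_0$ or $p \geq_P \max B_1$, and $\varphi(p) = \pi(p)$ otherwise. This is again $\Delta^0_0$ in $\leq_P, \max B_0, \max B_1$, so exists in $\RCA$. The finite case analyses proving that $\varphi$ is a parsimonious set representation are purely finitary and transfer without change. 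Then the distinct elements $\max B_0, \max B_1$, together with the common value $q_0 = q_1 = q^*$, witness directly that $\bf P$ fails to be saturated in the sense of Definition~\ref{D:sat_rca}~(1).

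For the reverse direction, assume $\bf P$ is not saturated in the sense of (1) and fix a parsimonious $\varphi$, distinct $p_0, p_1 \in P$, and $q$ with $\{q\} = \varphi(p_i) - \bigcup_{p' <_P p_i} \varphi(p')$ for each $i \in \{0,1\}$. Using only the defining clauses of set representation and parsimony, derive as in Theorem~\ref{T:main} that $p_0, p_1$ are $\leq_P$-incomparable and that neither is $\leq_P$-minimal. Define $I_i$ to be the set of all $p \in P$ with $p <_P p_i$ that are $\leq_P$-incomparable with $p_{1-i}$; this is $\Delta^0_0$-definable in $\leq_P, p_0, p_1$, so exists in $\RCA$. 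Show each $I_i$ is nonempty by the argument from Theorem~\ref{T:main}: otherwise parsimony forces $\varphi(p_i) \subseteq \varphi(p_{1-i})$, contradicting incomparability. Form the bouquets $B_i = I_i \cup \{p_i\}$ and check, exactly as in Theorem~\ref{T:main}, that they are parallel and not skewly topped.

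No step poses a genuine obstacle: every object used is recursively definable and every verification is a finite case analysis. The contrast with Definition~\ref{D:sat_rca}~(2) lies precisely in the step of reading off $\alpha_\varphi$ globally, which requires, for each $p$, selecting the unique element of $\varphi(p) - \bigcup_{p' <_P p} \varphi(p')$, a $\Sigma^0_1$ existence statement whose uniqueness comes from parsimony. This is exactly the kind of combination that $\RCA$ cannot in general resolve uniformly. By quantifying throughout over pairs of purported new elements rather than over a global function, the proof stays $\Delta^0_1$.
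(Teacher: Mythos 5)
Your proposal is correct and follows the paper's own proof: the paper likewise observes that both directions of the proof of Theorem \ref{T:main} go through in $\RCA$ once saturation is read as in Definition \ref{D:sat_rca}~(1), since the failure of saturation is then witnessed locally by a pair $p_0 \neq p_1$ with a common new element rather than by the global map $\alpha_\varphi$, and all sets constructed ($\varphi$ from $\pi$ in one direction, the $I_i$ in the other) are given by $\Sigma^0_0$ comprehension. Your added remarks on why Definition \ref{D:sat_rca}~(2) is genuinely harder are consistent with the paper's subsequent proposition.
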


\begin{proof}
$\RCA$ suffices to carry out the left-to-right direction of the proof of Theorem \ref{T:main}.  For the right-to-left direction, fix a partial order ${\bf P} = (P, \leq_P)$ and a parsimonious set representation $\varphi \subseteq P \times Q$.  Suppose there exists $p_0 \neq p_1$ in $P$ such that $\varphi(p_0) - \bigcup_{p' <_P p_0} \varphi(p') = \varphi(p_1) - \bigcup_{p' <_P p_1} \varphi(p')$.  Then we can argue as in the right-to-left direction of the proof of Theorem \ref{T:main} that there exist parallel bouquets in $\bf P$ which are not skewly topped.
\end{proof}

\begin{prop}
Over $\RCA$, the following are equivalent:
\begin{enumerate}
\item $\ACA$;
\item for every parsimonious set representation $\varphi$ of a partial order, the map $\alpha_\varphi$ exists;
\item a partial order is saturated according to Definition \ref{D:sat_rca}~(2) if and only if every two parallel bouquets in it are skewly topped;
\item a partial order is saturated according to Definition \ref{D:sat_rca}~(1) if and only if it is saturated according to Definition \ref{D:sat_rca}~(2).
\end{enumerate}
\end{prop}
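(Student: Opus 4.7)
The plan is to establish the cyclic implications $(1) \Rightarrow (2) \Rightarrow (3) \Rightarrow (4) \Rightarrow (1)$; the first three are relatively routine given the material already at hand, while the substantive direction is $(4) \Rightarrow (1)$.

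For $(1) \Rightarrow (2)$, the graph of $\alpha_\varphi$ is specified by the $\Pi^0_1$ condition ``$q \in \varphi(p)$ and $q \notin \varphi(p')$ for every $p' <_P p$''. Parsimony guarantees that for each $p$ there is a unique such $q$, so under $\ACA$ the corresponding total function can be formed. For $(2) \Rightarrow (3)$, assuming (2), every parsimonious $\varphi$ has its $\alpha_\varphi$ available in $\RCA$, and both directions of Theorem~\ref{T:main} then formalize verbatim: the forward direction constructs, from a pair of parallel bouquets not skewly topped, an explicit parsimonious $\varphi$ satisfying $\alpha_\varphi(\max B_0) = \alpha_\varphi(\max B_1)$; the backward direction extracts, from a witness that $\alpha_\varphi$ fails to be injective, a pair of parallel bouquets not skewly topped. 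For $(3) \Rightarrow (4)$, Proposition~\ref{P:weak} gives in $\RCA$ that saturation in sense~(1) is equivalent to every pair of parallel bouquets being skewly topped, while (3) provides the analogous equivalence for saturation in sense~(2); chaining the two equivalences yields ``sat~(1) iff sat~(2)'', which is~(4).

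The substantive direction is $(4) \Rightarrow (1)$. Working in $\RCA$ together with the assumption~(4), it suffices to show that the range of every injective $f : \N \to \N$ exists. The plan is to construct, uniformly in $f$, a partial order $\bf P$ and a parsimonious set representation $\varphi$ of $\bf P$ enjoying two properties: (i)~$\bf P$ admits no two parallel bouquets, so by Proposition~\ref{P:weak} it is saturated in sense~(1) provably in $\RCA$; and (ii)~the map $\alpha_\varphi$, once its existence is secured, permits computation of $\ran f$. Property~(i) together with (4) promotes $\bf P$ to saturation in sense~(2), which in particular supplies $\alpha_\varphi$; property~(ii) then produces $\ran f$ and hence $\ACA$. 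A natural candidate for $\bf P$ is the flat antichain $\{a_n : n \in \N\}$ beneath a single top $\top$, which admits no parallel bouquet pair (any bouquet of size at least two must contain $\top$, and $\top$ is comparable with every other element of $\bf P$), paired with a parsimonious $\varphi$ for which $\varphi(\top)$ is a carefully chosen $\Delta^0_1$-subset of the ambient set $Q$.

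The main obstacle lies in the design of $\varphi(\top)$. Parsimonious representations of flat-with-top orders typically make $\alpha_\varphi(\top)$ trivial to compute, because the unique ``new'' element is named outright by the definition of $\varphi(\top)$. The delicate point is to arrange $\varphi(\top)$ as a cofinite $\Delta^0_1$-set containing $\bigcup_n \varphi(a_n)$, whose single missing element is only \emph{implicitly} pinned down by parsimony, in such a way that identifying it in $\RCA$ is tantamount to deciding membership in $\ran f$; this amounts to transplanting into a saturated (in sense~(1)) order the $\Pi^0_1$-coding that would be straightforward on an unsaturated ``star''-shaped order such as $\{a_n\} \cup \{c_{n,s}\}$ with $c_{n,s} <_P a_n$ and $\varphi(c_{n,s})$ designed to depend on $f(s)$.
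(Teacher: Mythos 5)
Your handling of the first three implications is correct and essentially the paper's: (1)$\Rightarrow$(2) via arithmetical definability of the graph of $\alpha_\varphi$, (2)$\Rightarrow$(3) by re-running Theorem \ref{T:main} once $\alpha_\varphi$ is in hand, and your derivation of (4) by chaining Proposition \ref{P:weak} with (3) is a clean way to organize what the paper dismisses as obvious. The genuine gap is in (4)$\Rightarrow$(1), which you rightly flag as the substantive direction but do not actually prove. Your candidate order --- an antichain $\{a_n\}$ beneath a single top $\top$ --- cannot work even in principle: to obtain $\ACA$ you must recover the \emph{entire} set $\ran f$, which is infinitely much information, whereas in that order the only value of $\alpha_\varphi$ not already determined outright is $\alpha_\varphi(\top)$, a single element of $Q$. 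One number cannot encode an infinite set, so no choice of $\varphi(\top)$, however delicate, yields more than finitely many bits. Nor can you repair this by taking infinitely many disjoint ``star'' components, one per $n$: bouquets drawn from distinct components are parallel and not skewly topped, so the order is no longer saturated in sense (1) and hypothesis (4) gives you nothing.

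What is needed --- and what the paper supplies --- is a single order with \emph{no} parallel bouquets but with \emph{infinitely many} implicitly determined values of $\alpha_\varphi$, one per natural number. The paper takes $P = \{p_{i,s}\}$ with a linear order (linearity kills all parallel bouquets at once, so sense-(1) saturation is automatic by Proposition \ref{P:weak} and (4) then hands you $\alpha_\varphi$) in which every element has an immediate predecessor $p^-$, and the representation $\varphi(p) = \{p' <_P p : p' \neq p_{j,0} \text{ for all } j\}$ forces $\alpha_\varphi(p) = p^-$. The order is rigged so that $p_{i,0}^- = p_{i,1}$ exactly when $i \notin \ran f$ and $p_{i,0}^- = p_{i,t+2}$ when $f(t) = i$; thus $\{i : \alpha_\varphi(p_{i,0}) \neq p_{i,1}\} = \ran f$. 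Your closing paragraph correctly names the obstacle (making the ``new'' element of $\varphi(p)$ only implicitly pinned down by parsimony) but does not overcome it, and the single-top architecture you propose forecloses the possibility of doing so.
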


\begin{proof}
For every parsimonious set representation $\varphi$ of a partial order $(P, \leq_P)$ we have $\alpha_\varphi$
% = \{ \langle p, q \rangle : q \in \varphi(p) \wedge (\forall p' \in P)[\,p' <_P p \to q \notin \varphi(p')\,]\}$,
arithmetically definable, so (1) implies (2).  By Proposition \ref{P:weak} it follows that (2) implies (3), and obviously the equivalence of (1) and (3) implies the equivalence of (1) and (4).

It thus remains only to show that (3) implies (1).  To this end, we prove from (3) that the range of every injective function $f : \N \to \N$ exists (this is equivalent; see \cite{Si}, Theorem III.1.3). So fix $f$ and define a partial order ${\bf P} = (P, \leq_P)$ as follows.  Let $P = \{p_{i,s} : i,s \in \N\}$.
\begin{itemize}
\item For all $i <_\N j$, let $p_{i,s} >_P p_{j,t}$ for all $s,t \in \N$.
\item For each $i$ and all $s <_\N t$, let $p_{i,s} <_P p_{i,t}$ if $s > 0$ and $f(t-2) = i$, and let $p_{i,s} >_P p_{i,t}$ otherwise.
\end{itemize}
In other words, if $f(t) \neq i$ for all $t$ then we have $p_{i,s} >_P p_{i,t}$ for all $s <_\N t$, while if $f(t) = i$ for some $t$ then we have $p_{i,0} >_P  p_{i,t+2} >_P p_{i,s} >_P p_{i,s'}$ for all $s <_\N s'$ in $\N - \{0,t+2\}$.  $\RCA$ suffices to show that $\bf P$ exists, that it is a linear order, and that every element has an immediate $\leq_P$-predecessor.  In particular, linearity implies that there are no parallel bouquets in $\bf P$, so by (3) $\bf P$ must be saturated according to Definition \ref{D:sat_rca}~(2).

Define
$$
\varphi = \{ \langle p,p' \rangle \in P \times P: p >_P p' \wedge (\forall i \in \N)[\, p' \neq p_{i,0}\, ]\},
$$
 which exists by $\Sigma^0_0$ comprehension and is clearly a set representation of $\bf P$.  If we let $p^-$ denote the immediate $\leq_P$-predecessor of each $p \in P$, then we see that $\{ p^-\} = \varphi(p) - \bigcup_{p' <_P p} \varphi(p')$.  Furthermore, if $q \in \varphi(p)$ for some $p = p_{i,s}$ then $p >_P q$ and $q = p_{j,t}$ for some $j \geq_\N i$ and $t >_\N 0$, so $q = p_{j,t'}^-$ for some $p_{j,t'} \leq_P p$.  Thus, $\varphi$ is parsimonious.
 
 It follows that $\alpha_\varphi : P \to P$ exists and is injective, and by the preceding discussion we have $\alpha_\varphi(p) = p^-$ for all $p$.  Let $R = \{i \in \N: \alpha_\varphi(p_{i,0}) \neq p_{i,1} \}$, which exists by $\Sigma^0_0$ comprehension.  Then by construction of $\leq_P$, we have that $i \in R$ if and only if $p_{i,0}^- = p_{i,t+2}$ for some $t$ such that $f(t) = i$, which in turn holds if and only if $i \in \ran f$.  Hence, the range of $f$ is equal to $R$ and so consequently exists.  This completes the proof.
\end{proof}

\bibliographystyle{plain}
\bibliography{setrepbib.bib}

\end{document}